\newcommand{\apply}[3][]{\left<#2,#3\right>_{#1}}
\renewcommand{\phi}{\varphi}
\newcommand{\eps}{\varepsilon}
\renewcommand{\rho}{\varrho}
\DeclareMathOperator{\divi}{div}
\theoremstyle{definition}
\newtheorem{theorem}{Theorem}[section]
\newtheorem{proposition}[theorem]{Proposition}
\newtheorem{lemma}[theorem]{Lemma}
\newtheorem{remark}[theorem]{Remark}
\newtheorem{example}[theorem]{Example}
\newtheorem{definition}[theorem]{Definition}
\title[Quasi-linear eigenvalue problems]{A note on the implicit function theorem for quasi-linear eigenvalue problems}
\date{September 23, 2011}
\author{Robin Nittka}
\address{Robin Nittka\\Max Planck Institute for Mathematics in the Sciences\\Inselstr. 22\\04103 Leipzig\\Germany}
\email{nittka@mis.mpg.de}
\keywords{quasi-linear eigenvalue problem, Gelfand's equation, compact resolvent, implicit function theorem}
\subjclass[2010]{Primary: 35P30; Secondary: 47J07}
\numberwithin{equation}{section}
\begin{document}
\begin{abstract}
	We consider the quasi-linear eigenvalue problem
	$-\Delta_p u = \lambda g(u)$ subject to Dirichlet boundary conditions on a bounded open set $\Omega$, where
	$g$ is a locally Lipschitz continuous functions. Imposing no further conditions on $\Omega$ or $g$
	we show that for small $\lambda$ the problem has a bounded solution which is
	unique in the class of all small solutions. Moreover, this curve of solutions
	depends continuously on $\lambda$.
\end{abstract}
\maketitle

\section{Introduction}

We give an argument in the spirit of~\cite[\S3]{CR73} in order to prove existence of small solutions
for the quasi-linear equation
\begin{equation}\label{eq:scalar}
	\left\{ \begin{aligned}
		-\Delta_p u & = \lambda g(u) && \text{on } \Omega \\
		u & = 0 && \text{on } \partial\Omega, \\
	\end{aligned} \right.
\end{equation}
where $g\colon \mathds{R} \to \mathds{R}$ is locally Lipschitz continuous, $\lambda$ is small,
$\Omega$ is an arbitrary bounded open subset of $\mathds{R}^N$ and $p$ is in $(\frac{2N}{N+2}, 2]$.
More precisely, we show that there is a neighborhood $U$ of zero in $L^\infty(\Omega)$ such
that for small $\lambda$ there is a unique solution $u_\lambda \in U \cap W^{1,p}_0(\Omega)$ of~\eqref{eq:scalar}.
Moreover, the dependence of $u_\lambda$ on $\lambda$ is Lipschitz continuous in the norms
of $L^\infty(\Omega)$ and $W^{1,p}_0(\Omega)$.

Equation~\eqref{eq:scalar} is an example of a nonlinear eigenvalue problem (semilinear if $p=2$
and quasi-linear otherwise).
If $g$ has subcritical growth, the situation is comparatively easy to handle. Namely, if
$g(x) \le (1 + |x|)^q$ for all $x \in \mathds{R}$ for some $q < \frac{Np}{N-p}-1$,
then $u \mapsto g(u)$ is compact from $W^{1,p}_0(\Omega)$ to $W^{-1,p'}(\Omega)$
and~\eqref{eq:scalar} can for example be attacked by fixed point arguments in the space $W^{1,p}_0(\Omega)$
of energy solutions. The critical case $q = \frac{Np}{N-p}-1$ is more difficult because $u \mapsto g(u)$
is no longer compact, but it is still possible to work in $W^{1,p}_0(\Omega)$.
The subcritical and the critical case have been extensively studied,
usually with variational methods under additional monotonicity assumptions.
We refer to~\cite{Amann76} for a detailed survey on semilinear eigenvalue problem
and cite~\cite{DH01,KL03,AMM05,CS07,ILU10} as examples for results in the quasi-linear case.

Here we also want to allow for supercritical growth.
We think of $g = \exp$ as a model case. With this choice for $g$ equation~\eqref{eq:scalar} is often referred to as
Gelfand's equation, in particular if $p=2$, and has a physical interpretation
in astrophysics~\cite{Ch57}.
For $p=2$ the situation is quite well understood, see for example~\cite{Dancer88,CR75,AD07,PM02}
for information about the number of solutions.
For $p \neq 2$ less is known, but we refer to~\cite{AAP94} and references therein
for results about existence, non-existence, multiplicity and stability.
However, the methods of~\cite{AAP94} rely heavily on structure assumptions on $g$,
namely some kind of monotonicity and growth conditions, and thus do not generalize well.
Here we propose a different approach to~\eqref{eq:scalar} based on the implicit
function theorem, which goes along the lines of~\cite{CR75} where the case $p=2$ is studied.
We emphasize the method's flexibility by allowing for general functions $g$
and by later on studying systems of $p$-Laplace equations instead of scalar equations.

There are two major difficulties to be overcome when using the implicit function theorem for~\eqref{eq:scalar}.
Firstly, one has to work in a space of bounded functions
in order to be able to handle composition with $g$. For arbitrary domains
a space of H\"older continuous functions like in~\cite{CR75} is not suitable,
so we have to resort to $L^\infty(\Omega)$ like in~\cite{Dipl}.
Secondly, equation~\eqref{eq:scalar} does not behave well under linearization unless $p=2$,
so we cannot expect that the implicit function theorem for continuously differentiable
functions applies and have to use a topological version for compact Lipschitz maps.
Thus we need compactness of the resolvent of the $p$-Laplace operator in $L^\infty(\Omega)$,
which for rough domains seems to be a new result,
and we need the local Lipschitz continuity of $(-\Delta_p)^{-1}$ in $L^\infty(\Omega)$,
which has recently been obtained by Markus Biegert~\cite{Bie10}.
Local Lipschitz continuity fails for $p > 2$ and is not known for $p \in (1,\frac{2N}{N+2}]$,
which is the main reason why we have to restrict ourselves to $p \in (\frac{2N}{N+2},2]$.

\section{$\Delta_p$ and local Lipschitz continuity of the resolvent}
Throughout the article $\Omega$ denotes a fixed bounded open subset of $\mathds{R}^N$
and $p$ is a parameter satisfying $p > \frac{2N}{N+2}$. Thus $W^{1,p}_0(\Omega)$
is compactly embedded into $L^2(\Omega)$. Later on we will also
require that $p \le 2$. We define
\begin{equation}\label{eq:phip}
	\phi_p(u) \coloneqq \frac{1}{p} \int_\Omega |\nabla u|^p
\end{equation}
for $u \in W^{1,p}_0(\Omega) \subset L^2(\Omega)$ and $\phi_p(u) \coloneqq \infty$
for $u \in L^2(\Omega) \setminus W^{1,p}_0(\Omega)$. Then $\phi_p$ is a proper, convex,
lower semicontinuous functional on $L^2(\Omega)$. Its subgradient $-\Delta_p$ can
be described by $u \in D(\Delta_p)$ and $-\Delta_p u = f \in W^{-1,p'}(\Omega)$ if and only if
$u \in W^{1,p}_0(\Omega)$ and
\[
	\int_\Omega |\nabla u|^{p-2} \nabla u \; \nabla v = \apply{f}{v} \text{ for all } v \in W^{1,p}_0(\Omega),
\]
which means that on a formal level we can write $\Delta_p u = \divi( |\nabla u|^{p-2} \nabla u)$. The operator
$I - \alpha \Delta_p\colon W^{1,p}_0(\Omega) \to L^2(\Omega)$
is invertible for all $\alpha > 0$. Its inverse $J_\alpha \coloneqq (I + \alpha \Delta_p)^{-1}$ is contractive
with respect to the norm of $L^2(\Omega)$.
Moreover, the resolvent identity
\begin{equation}\label{eq:pseudo_resolvent}
	J_\beta = J_\alpha\Bigl( \frac{\alpha}{\beta} I + \bigl(1-\frac{\alpha}{\beta}\bigr) J_\beta \Bigr)
\end{equation}
holds for all $\alpha, \beta > 0$. The operator $-\Delta_p$ is coercive on $L^2(\Omega)$
and hence invertible due to Poincar\'e's inequality.
For proofs of these facts and further details we refer to~\cite{Showalter}.

We state the local Lipschitz continuity of $(-\Delta_p)^{-1}$ in the following lemma.
This limit case is not included in~\cite[Theorem~5.1]{Bie10}, but can be obtained
from~\cite[Theorem~3.1]{Bie10} by precisely the same arguments.

\begin{lemma}\label{lem:lipschitz}
	Let $p \in (\frac{2N}{N+2},2]$. If we pick $q \in (1,\infty)$ sufficiently large,
	we have the following property: if $u, v \in W^{1,p}_0(\Omega)$
	satisfy $-\Delta_p u = f$ and $-\Delta_p v = g$ with
	functions $f, g \in L^q(\Omega)$. Then
	\begin{equation}\label{eq:Best}
		\|u - v\|_\infty \le c \bigl( \|f\|_q + \|g\|_q \bigr)^{\frac{2-p}{p-1}} \|f-g\|_q
	\end{equation}
	for a constant $c$ that depends only on $p$ and $\Omega$.
\end{lemma}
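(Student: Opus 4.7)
The strategy is to follow the proof of \cite[Theorem~5.1]{Bie10} line by line, but starting from \cite[Theorem~3.1]{Bie10} as the regularity input, as the author indicates. The argument is a nonlinear Stampacchia/De Giorgi estimate applied to $w := u - v$, and the two main technical ingredients are the classical monotonicity inequality for the $p$-Laplacian together with quantitative a priori bounds on $\nabla u$ and $\nabla v$.

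Concretely, I would first subtract the two weak formulations to obtain
\[
	\int_\Omega \bigl( |\nabla u|^{p-2}\nabla u - |\nabla v|^{p-2}\nabla v \bigr)\cdot\nabla\psi = \int_\Omega (f-g)\psi
\]
for every $\psi \in W^{1,p}_0(\Omega)$, and then test against the truncation $\psi_k := \operatorname{sign}(w)\,(|w| - k)^+ \in W^{1,p}_0(\Omega)$ for a level $k \ge 0$. For $p \in (1,2]$, the standard monotonicity inequality
\[
	\bigl(|\xi|^{p-2}\xi - |\eta|^{p-2}\eta\bigr)\cdot(\xi - \eta) \ge c_p (|\xi| + |\eta|)^{p-2} |\xi - \eta|^2
\]
bounds the left-hand side from below by a weighted $L^2$-norm of $\nabla w$ on the super-level set $\{|w| > k\}$. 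The next step is to factor out the degenerate weight $(|\nabla u| + |\nabla v|)^{p-2}$ by H\"older's inequality and absorb it using the a priori estimate for $\nabla u, \nabla v$ in terms of $\|f\|_q + \|g\|_q$ supplied by \cite[Theorem~3.1]{Bie10}, which forces $q$ to be taken large enough. The resulting decay estimate for the Lebesgue measure of $\{|w| > k\}$ is then iterated by De Giorgi's method to produce an $L^\infty$-bound on $w$, and tracing the homogeneity through the iteration is what yields the factor $(\|f\|_q + \|g\|_q)^{(2-p)/(p-1)}$.

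The main obstacle is the degeneracy of the weight $(|\nabla u| + |\nabla v|)^{p-2}$ when $p < 2$: it blows up where $\nabla u$ or $\nabla v$ is small, so the iteration only closes because the quantitative gradient bounds from \cite[Theorem~3.1]{Bie10} are strong enough to tame it, and this is precisely why $q$ must be chosen sufficiently large. The restriction $p \in (\frac{2N}{N+2}, 2]$ is dictated by the range in which those a priori bounds are available and in which the variational set-up fits naturally into $L^2(\Omega)$; for $p > 2$ the monotonicity inequality runs in the wrong direction and the whole scheme breaks down, as noted in the introduction.
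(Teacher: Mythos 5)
The paper does not actually prove this lemma: it is stated as following from \cite[Theorem~3.1]{Bie10} ``by precisely the same arguments'' as \cite[Theorem~5.1]{Bie10}, and no proof environment is supplied, so there is no internal argument to compare against; your proposal adopts exactly that deferral as its scaffolding, which is consistent with the paper's own treatment. Your reconstruction of the Biegert argument --- subtract the weak formulations, test against the Stampacchia truncation $\operatorname{sign}(w)(|w|-k)^+$, invoke the sharp monotonicity inequality for $1 < p \le 2$, peel off the degenerate weight by H\"older at the cost of a factor $\bigl(\|\nabla u\|_p + \|\nabla v\|_p\bigr)^{2-p}$, and run a De Giorgi iteration --- is the standard route and does reproduce the homogeneity in \eqref{eq:Best} once one substitutes the energy bound $\|\nabla u\|_p \le C\|f\|_q^{1/(p-1)}$; this is very plausibly what \cite{Bie10} does. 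Two quibbles, neither fatal. The gradient bound you invoke is only an $L^p$ bound, which follows from the energy identity $\int_\Omega |\nabla u|^p = \int_\Omega f u$ together with Sobolev embedding and needs no deep theorem, so your account that \cite[Theorem~3.1]{Bie10} supplies ``quantitative a priori bounds on $\nabla u$'' is likely a misattribution; that cited theorem is more probably the single-equation $L^\infty$ estimate or the abstract iteration lemma. And the parenthetical claim that ``for $p > 2$ the monotonicity inequality runs in the wrong direction'' is imprecise: the inequality
\[
	\bigl(|\xi|^{p-2}\xi - |\eta|^{p-2}\eta\bigr)\cdot(\xi - \eta) \ge c_p \bigl(|\xi| + |\eta|\bigr)^{p-2} |\xi - \eta|^2
\]
is valid for every $p > 1$; for $p > 2$ the weight $(|\xi|+|\eta|)^{p-2}$ merely degenerates the other way (it vanishes, rather than blows up, where the gradients are small), and in any case the scaling identity $(-\Delta_p)^{-1}(\lambda f) = \lambda^{\frac{1}{p-1}}(-\Delta_p)^{-1}f$, which the paper records immediately after the lemma, already shows that Lipschitz continuity near zero is genuinely false for $p > 2$ regardless of the method one tries.
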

%

Lemma~\ref{lem:lipschitz} says in particular that $(-\Delta_p)^{-1}$ is locally Lipschitz
continuous on $L^\infty(\Omega)$. This seems to be a non-trivial result. For example,
the scaling behavior
\[
	(-\Delta_p)^{-1} (\lambda f) = \lambda^{\frac{1}{p-1}} (-\Delta_p)^{-1}f.
\]
implies that $(-\Delta_p)^{-1}$ cannot be Lipschitz continuous in
any neighborhood of zero if $p > 2$, and local Lipschitz
continuity for $p \in (1, \frac{2N}{N+2}]$ seems to be an open problem.

On the other hand, global Lipschitz continuity of $J_\alpha \coloneqq (I - \alpha \Delta_p)^{-1}$
on $L^\infty(\Omega)$ for every $\alpha > 0$ and every $p \in (1,\infty)$ is comparatively easy to establish.
\begin{lemma}\label{lem:contr}
	For every $q \in [2,\infty]$ the restriction of $J_\alpha$ to $L^q(\Omega)$ is contractive
	in the norm of $L^q(\Omega)$.
\end{lemma}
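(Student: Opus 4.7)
The plan is to exploit the classical monotonicity inequality
\[
	(|\nabla u_1|^{p-2}\nabla u_1 - |\nabla u_2|^{p-2}\nabla u_2) \cdot \nabla(u_1-u_2) \ge 0,
\]
which holds pointwise for every $p > 1$. Writing $u_i \coloneqq J_\alpha f_i$, $w \coloneqq u_1 - u_2$, and $h \coloneqq f_1 - f_2$, and subtracting the two weak formulations
\[
	\int_\Omega u_i v + \alpha \int_\Omega |\nabla u_i|^{p-2}\nabla u_i \cdot \nabla v = \int_\Omega f_i v \qquad (v \in W^{1,p}_0(\Omega)),
\]
one arrives at a weak identity for $w$ in which, upon testing with $v = \phi(w)$ for any bounded, Lipschitz, nondecreasing $\phi$ with $\phi(0)=0$, the $p$-Laplace contribution is nonnegative by monotonicity and can be discarded, leaving $\int_\Omega w\phi(w) \le \int_\Omega h\phi(w)$.

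For $q = \infty$, I would set $k \coloneqq \|h\|_\infty$ and choose $\phi(s) \coloneqq (s-k)^+$. Because $k \ge 0$, the trace of $(w-k)^+$ vanishes and the test function is admissible, giving $\int_\Omega (w-h)(w-k)^+ \le 0$. On $\{w>k\}$ the integrand is nonnegative since $w > k \ge h$ almost everywhere, so the inequality forces $|\{w>k\}| = 0$; swapping $u_1$ and $u_2$ yields the matching lower bound and thus $\|w\|_\infty \le \|h\|_\infty$.

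For $q \in [2,\infty)$ the natural choice $\phi(s) = |s|^{q-2}s$ is unbounded, so I would work with the truncated variant $\phi_M(s) \coloneqq |T_M s|^{q-2}\, T_M s$, where $T_M s \coloneqq \mathrm{sign}(s)\min(|s|,M)$. Since $q \ge 2$, $\phi_M$ is Lipschitz and $\phi_M(w) \in W^{1,p}_0(\Omega) \cap L^\infty(\Omega)$, and monotonicity still yields a nonnegative gradient term. Combining the pointwise bound $w\, T_M w \ge (T_M w)^2$ with H\"older's inequality gives
\[
	\int_\Omega |T_M w|^q \;\le\; \int_\Omega h\,|T_M w|^{q-2} T_M w \;\le\; \|h\|_q \Bigl( \int_\Omega |T_M w|^q \Bigr)^{(q-1)/q},
\]
from which $\|T_M w\|_q \le \|h\|_q$; monotone convergence as $M \to \infty$ delivers the claim.

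The main obstacle is the admissibility of the test function: one must stay inside $W^{1,p}_0(\Omega)$ in order to discard the $p$-Laplace term via monotonicity. For $q = \infty$ this is clean because $k = \|h\|_\infty$ is a finite nonnegative number; for $q < \infty$ the truncation $T_M$ is essential, and one has to check that the estimate survives the limit $M \to \infty$. Everything else is an application of H\"older and the monotonicity of $-\Delta_p$.
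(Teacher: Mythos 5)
Your proof is correct, and it takes a genuinely different route from the paper's. The paper argues abstractly: it invokes the theory of nonlinear Dirichlet forms (citing that $\phi_p$ satisfies the axioms of~\cite{CG03}), deduces that the associated semigroup is order preserving and $L^\infty$-contractive, transfers these properties from the semigroup to the resolvent via~\cite{Brezis}, and then interpolates between $q=2$ and $q=\infty$ using~\cite{Br69}. You instead work directly with the weak formulation: you subtract the two equations, test with a nondecreasing Lipschitz $\phi$ vanishing at zero, and discard the $p$-Laplace contribution by monotonicity. For $q=\infty$ your choice $\phi(s)=(s-\|h\|_\infty)^+$ is the classical Stampacchia truncation and yields a maximum-principle argument; for $q\in[2,\infty)$ your truncated power $\phi_M$ (here $q\ge 2$ is exactly what makes $\phi_M$ Lipschitz near zero, so the test function lands in $W^{1,p}_0(\Omega)\cap L^\infty(\Omega)$), combined with $w\,T_M w\ge (T_M w)^2$, H\"older, and monotone convergence, gives the $L^q$ contraction directly without any interpolation step. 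The paper's route is shorter on the page but outsources the work to three references and yields extra structural information (order preservation); your route is longer but self-contained, elementary, and proves the contractivity for each $q$ at one stroke. One small point worth making explicit: a priori $w$ need not lie in $L^q(\Omega)$ for $q>2$, so the monotone-convergence step is doing double duty, establishing both $w\in L^q(\Omega)$ and the bound $\|w\|_q\le\|h\|_q$ simultaneously; this is fine since $\|T_M w\|_q\le\|h\|_q$ holds uniformly in $M$, but it deserves a sentence.
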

\begin{proof}
	It can be proved as in~\cite[Theorem~4.1]{CG03} that $\phi_p$ defined in~\eqref{eq:phip}
	is a nonlinear Dirichlet form in the sense of~\cite{CG03}, i.e., the corresponding semigroup on $L^2(\Omega)$
	is order preserving and contractive in the norm of $L^\infty(\Omega)$. Thus the resolvent
	has the same properties~\cite{Brezis}, and by interpolation we obtain the result for all $q \in [2,\infty]$,
	see for example~\cite{Br69}.
\end{proof}

\section{Compact resolvent}
We prove that $J_\alpha \coloneqq (I - \alpha \Delta_p)^{-1}$
acts as a compact operator on $L^\infty(\Omega)$. Recall that
a nonlinear operator is called compact if it is continuous and maps bounded sets to relatively
compact sets. An operator is called bounded if it maps bounded sets to bounded sets, so trivially every compact
operator is bounded.
Compactness of $J_\alpha$ (and similar operators) in $L^q$-spaces for $q < \infty$ has been studied
in~\cite[\S5]{DD09}. However, the case $q=\infty$ seems to be new and is in fact not accessible
by the methods in~\cite{DD09}. To be more precise, the argument in~\cite{DD09} goes via an interpolation inequality,
so in order to transfer their idea to $L^\infty(\Omega)$ one needs to prove boundedness of the resolvent
into in a space of higher regularity. This regularity, however, is only to be expected if
$\Omega$ is sufficiently smooth. Thus instead we use the following different approach, adapted
from~\cite[Theorem~7.1]{AD07}, which we present in an abstract framework.

\begin{definition}
	Let $X$ be a Banach space. We say that a family
	$(J_\alpha)_{\alpha > 0}$ of (possibly nonlinear) operators on $X$
	is a (nonlinear) \emph{pseudo-resolvent} if~\eqref{eq:pseudo_resolvent}
	holds for all $\alpha, \beta > 0$.
\end{definition}

\begin{lemma}\label{lem:operatornorm_conv}
	Let $(J_\alpha)_{\alpha > 0}$ be a pseudo-resolvent on a Banach space $X$. Assume
	that the family $(J_\alpha)_{\alpha > 0}$ is uniformly bounded and that $(J_\alpha)_{\alpha > 0}$
	is equicontinuous on bounded subsets of $X$.
	Then for all $k \in \mathds{N}_0$ and all $\beta > 0$ we have
	$J_\alpha^k J_\beta \to J_\beta$ as $\alpha \to 0$
	uniformly on bounded sets.
\end{lemma}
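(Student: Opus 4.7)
The plan is to use the resolvent identity~\eqref{eq:pseudo_resolvent} to express $J_\beta x$ as $J_\alpha$ applied to a point close to $J_\beta x$, and then to exploit equicontinuity of the family $(J_\alpha)$ on bounded sets. I proceed by induction on $k$; the case $k=0$ is trivial.

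For $k=1$: rewriting~\eqref{eq:pseudo_resolvent} applied to $x$ gives
\[
	J_\beta x = J_\alpha\Bigl( J_\beta x + \tfrac{\alpha}{\beta}(x - J_\beta x) \Bigr).
\]
Set $z_\alpha(x) \coloneqq J_\beta x + \tfrac{\alpha}{\beta}(x - J_\beta x)$. If $B \subset X$ is bounded, then uniform boundedness of $(J_\alpha)$ implies that $\{x - J_\beta x : x \in B\}$ is bounded, so $\|z_\alpha(x) - J_\beta x\| \le \frac{\alpha}{\beta} M_B \to 0$ as $\alpha \to 0$, uniformly on $B$. Both $J_\beta x$ and $z_\alpha(x)$ lie in a single bounded set $B' \supset J_\beta(B) \cup \bigcup_{\alpha \le 1} z_\alpha(B)$. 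Equicontinuity of $(J_\alpha)$ on $B'$ therefore yields
\[
	\bigl\| J_\alpha J_\beta x - J_\beta x \bigr\| = \bigl\| J_\alpha (J_\beta x) - J_\alpha(z_\alpha(x)) \bigr\| \to 0
\]
uniformly on $B$.

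For the inductive step, assume $J_\alpha^{k-1} J_\beta \to J_\beta$ uniformly on bounded sets. Given a bounded set $B$, uniform boundedness implies that the orbits $\{J_\alpha^j J_\beta x : x \in B,\ \alpha > 0,\ 0 \le j \le k\}$ all lie in a common bounded set $B''$. Combining the inductive hypothesis with equicontinuity of $(J_\alpha)$ on $B''$,
\[
	\bigl\| J_\alpha (J_\alpha^{k-1} J_\beta x) - J_\alpha (J_\beta x) \bigr\| \to 0
\]
uniformly on $B$. The case $k=1$ already shows $J_\alpha J_\beta x \to J_\beta x$ uniformly on $B$, and a triangle inequality closes the induction.

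The only genuinely non-trivial point is verifying that the iterated images $J_\alpha^j J_\beta x$ remain in a single bounded set on which equicontinuity can be invoked; this is immediate from the assumption of uniform boundedness. Everything else is bookkeeping with the resolvent identity and a triangle inequality.
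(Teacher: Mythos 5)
Your proof is correct, and it reorganizes the induction in a way that is somewhat cleaner than the paper's. Both arguments use the same ingredients — the resolvent identity~\eqref{eq:pseudo_resolvent}, uniform boundedness, and uniform equicontinuity on bounded sets — but the decompositions differ. You apply the resolvent identity once, to establish the base case $J_\alpha J_\beta \to J_\beta$ directly (writing $J_\beta x = J_\alpha(z_\alpha(x))$ with $z_\alpha(x) \to J_\beta x$), and then in the inductive step you split
\[
	J_\alpha^{k} J_\beta x - J_\beta x = \bigl( J_\alpha(J_\alpha^{k-1} J_\beta x) - J_\alpha(J_\beta x) \bigr) + \bigl( J_\alpha J_\beta x - J_\beta x \bigr),
\]
which requires only a single application of equicontinuity at each level. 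The paper instead invokes the resolvent identity inside the inductive step, writing $J_\alpha^{k}J_\beta = J_\alpha^{k+1} J_\beta + T_\alpha$ and reducing to $T_\alpha \to 0$; but $T_\alpha x_\alpha = J_\alpha^{k+1} y_\alpha - J_\alpha^{k+1} z_\alpha$ with $\|y_\alpha - z_\alpha\| \to 0$, so controlling it needs a nested secondary induction — iterating equicontinuity $k+1$ times. Your version avoids that inner induction entirely; the paper's version keeps the resolvent identity front and center at every step. Both are sound, and your bookkeeping of the bounded sets on which equicontinuity is invoked is exactly the point that needed care and is handled correctly.
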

\begin{proof}
	Let $\beta > 0$ be fixed. We prove the claim by induction.
	For $k = 0$ there is nothing to show.
	So assume that $J_\alpha^k J_\beta \to J_\beta$ as $\alpha \to 0$ uniformly on
	bounded sets for some $k \in \mathds{N}$. By~\eqref{eq:pseudo_resolvent},
	\[
		J_\alpha^k J_\beta
			= J_\alpha^{k+1} \Bigl( \frac{\alpha}{\beta} I + \bigl( 1 - \frac{\alpha}{\beta} \bigr) J_\beta \Bigr)
			= J_\alpha^{k+1} J_\beta + T_\alpha
	\]
	with
	\[
		T_\alpha \coloneqq J_\alpha^{k+1} \Bigl( \frac{\alpha}{\beta} I + \bigl( 1 - \frac{\alpha}{\beta} \bigr) J_\beta \Bigr)
				- J_\alpha^{k+1} J_\beta.
	\]
	Hence it suffices to show that $T_\alpha \to 0$ uniformly
	on bounded sets. To this end let $(x_\alpha)_{\alpha > 0}$ be a bounded family in $X$.
	Since $J_\beta$ is bounded, there exists $R > 0$ such that
	the vectors
	\[
		y_\alpha \coloneqq \frac{\alpha}{\beta} x_\alpha + \bigl(1 - \frac{\alpha}{\beta}\bigr) J_\beta x_\alpha 
		\qquad\text{and}\qquad
		z_\alpha \coloneqq J_\beta x_\alpha
	\]
	lie in the ball $B(0,R)$ for every $\alpha < 1$. In particular,
	\[
		\| y_\alpha - z_\alpha \|
			= \frac{\alpha}{\beta} \| x_\alpha - J_\beta x_\alpha \|
			\to 0
			\quad (\alpha \to 0).
	\]
	By uniform equicontinuity this implies that $\|J_\alpha y_\alpha - J_\alpha z_\alpha\| \to 0$
	as $\alpha \to 0$.
	Since also the families $(J_\alpha y_\alpha)_{\alpha > 0}$ and $(J_\alpha z_\alpha)_{\alpha > 0}$
	are bounded, we can proceed by induction and deduce that
	\[
		\|T_\alpha x_\alpha \| = \| J_\alpha^{k+1} y_\alpha - J_\alpha^{k+1} z_\alpha \| \to 0 \quad (\alpha \to 0).
	\]
	Thus $T_\alpha \to 0$ as $\alpha \to 0$ uniformly on bounded sets.
\end{proof}

\begin{theorem}\label{thm:compact_resolvent}
	Let $X$ and $Y$ be a Banach spaces with $Y$ continuously
	embedded into $X$. Let $(J_\alpha)_{\alpha > 0}$ be a pseudo-resolvent on $X$ consisting
	of continuous, bounded operators.
	Assume that $J_\alpha$ is compact on $X$ for one (or, equivalently, for all) $\alpha > 0$.
	Assume moreover that $J_\alpha(Y) \subset Y$ for all $\alpha > 0$ and that
	$(J_\alpha|_Y)_{\alpha > 0}$ is uniformly bounded and uniformly equicontinuous on bounded subsets of $Y$.
	Finally, assume that there exist $k \in \mathds{N}$ and $\beta > 0$ such that $J_\alpha^k$ is
	bounded and continuous from $X$ into $Y$ for all $\alpha > 0$.
	Then $J_\alpha|_Y$ is compact on $Y$ for every $\alpha > 0$.
\end{theorem}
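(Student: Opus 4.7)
The idea is to use Lemma~\ref{lem:operatornorm_conv} to approximate $J_\alpha|_Y$, uniformly on bounded subsets of $Y$, by operators that are manifestly compact on $Y$, and then to invoke the standard fact that such a limit of compact nonlinear operators is itself compact.

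Fix $\alpha > 0$. The hypotheses of Lemma~\ref{lem:operatornorm_conv} are satisfied by the family $(J_\gamma|_Y)_{\gamma > 0}$, and equicontinuity on bounded subsets in particular makes each $J_\gamma$ continuous on $Y$. Applying the lemma with $k+1$ in place of $k$, where $k$ is the integer from the theorem's hypotheses, and $\beta = \alpha$, yields
\[
	J_\gamma^{k+1} J_\alpha \to J_\alpha \qquad (\gamma \to 0)
\]
uniformly on bounded subsets of $Y$.

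Next I would check that, for every $\gamma > 0$, the operator $J_\gamma^{k+1} J_\alpha \colon Y \to Y$ is compact. Continuity is immediate from continuity of $J_\alpha$ and $J_\gamma$ on $Y$. For the mapping-of-bounded-sets part, let $B \subset Y$ be bounded; then $J_\alpha(B)$ is bounded in $Y$, hence bounded in $X$ by the continuous embedding $Y \hookrightarrow X$. Since $J_\gamma$ is compact on $X$, the set $J_\gamma(J_\alpha(B))$ is relatively compact in $X$. The hypothesis that $J_\gamma^k \colon X \to Y$ is continuous then forces it to send this relatively compact subset of $X$ to a relatively compact subset of $Y$, which is precisely $J_\gamma^{k+1} J_\alpha(B)$.

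Finally, a uniform-on-bounded-sets limit of compact (possibly nonlinear) maps between Banach spaces is itself compact: continuity passes to the limit by an $\varepsilon/3$ argument, and given a bounded $B \subset Y$ and $\varepsilon > 0$, one picks $\gamma$ so small that $J_\gamma^{k+1} J_\alpha$ approximates $J_\alpha$ to within $\varepsilon$ uniformly on $B$, after which a finite $\varepsilon$-net for the relatively compact image $J_\gamma^{k+1} J_\alpha(B)$ furnishes a $3\varepsilon$-net for $J_\alpha(B)$. The main conceptual obstacle the argument has to overcome is transferring compactness from the larger space $X$ to the smaller space $Y$, and this is exactly what the regularizing assumption $J_\gamma^k \colon X \to Y$ bounded and continuous is there to provide.
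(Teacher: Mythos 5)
Your proof is correct and follows essentially the same route as the paper: approximate $J_\alpha$ on $Y$, uniformly on bounded sets, by iterates of the pseudo-resolvent that factor through the smoothing map $J_\gamma^k\colon X\to Y$ applied to a compact operator on $X$, then pass to the limit using Lemma~\ref{lem:operatornorm_conv}. The only cosmetic difference is that you use $J_\gamma^{k+1}J_\alpha$ whereas the paper uses $J_\gamma^{k}J_\alpha$ (one factor fewer suffices since $J_\alpha$ is already known to be compact on $X$ and to map bounded subsets of $Y$ into bounded subsets of $X$), but this does not affect the argument.
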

\begin{proof}
	Since $J_\alpha^k$ is bounded from $X$ to $Y$
	and $J_\beta$ is compact on $X$, the operator $J_\alpha^k J_\beta$ is a compact operator
	from $X$ to $Y$ for all $\alpha > 0$ and $\beta > 0$. Hence $J_\alpha^k J_\beta|_Y$
	is a compact operator on $Y$.
	From Lemma~\ref{lem:operatornorm_conv} we obtain that $J_\alpha^k J_\beta|_Y \to J_\beta|_Y$
	as $\alpha \to 0$ uniformly on bounded sets. Hence $J_\beta|_Y$ is compact.
\end{proof}

We need Theorem~\ref{thm:compact_resolvent} only in the following situation.
\begin{example}\label{ex:pLaplace}
	Let $\Omega$ be an open, bounded subset of $\mathds{R}^N$. For every $p \in (\frac{2N}{N+2},\infty)$
	the operator $-\Delta_p$ has compact resolvent on $L^\infty(\Omega)$.
	Moreover, also the operator $(-\Delta_p)^{-1}$ is compact on $L^\infty(\Omega)$.
\end{example}
\begin{proof}
	Define $J_\alpha \coloneqq (I - \alpha \Delta_p)^{-1}$.
	Since $u \mapsto \phi_p(J_\alpha u)$ is bounded on bounded subsets of $L^2(\Omega)$,
	see for example~\cite[Proposition~IV.1.8]{Showalter}, the operator $J_\alpha$
	is bounded from $L^2(\Omega)$ to $W^{1,p}_0(\Omega)$. Since in addition $J_\alpha$ is continuous
	(in fact even contractive) on $L^2(\Omega)$, it is compact on $L^2(\Omega)$.

	The restriction of $J_\alpha$ to $L^\infty(\Omega)$ is equicontinuous and uniformly
	bounded by Lemma~\ref{lem:contr}.
	Iterating the estimates in~\cite[Theorem~2.5]{DD09} we obtain that
	$J_\alpha^k$ is bounded from $L^2(\Omega)$ to $L^\infty(\Omega)$ for some $k \in \mathds{N}$.
	Hence $J_\alpha^k$ is continuous from $L^2(\Omega)$ to $L^q(\Omega)$ for every $q < \infty$
	by interpolation. Thus $J_\alpha^{k+1}$ is bounded and continuous from $L^2(\Omega)$
	to $L^\infty(\Omega)$ by Lemma~\ref{lem:lipschitz}.
	Compactness of $J_\alpha$ on $L^\infty(\Omega)$ now follows from Theorem~\ref{thm:compact_resolvent}.

	Finally, $(-\Delta_p)^{-1}$ is bounded and continuous on $L^\infty(\Omega)$ by Lemma~\ref{lem:lipschitz}.
	Hence the trivial identity
	\[
		(-\Delta_p)^{-1} = J_\alpha\bigl( (-\Delta_p)^{-1} + \alpha I \bigr),
	\]
	which is valid for all $\alpha > 0$, shows that $(-\Delta_p)^{-1}$ is compact
	on $L^\infty(\Omega)$.
\end{proof}

\section{A topological implicit function theorem}
We need an implicit function theorem in Banach spaces for functions that are merely
Lipschitz continuous. The following argument is based on a rather deep theorem
about local inverses that relies on topological degree theory.
For finite-dimensional spaces this idea has been described in~\cite{Wuertz}.
The generalization to infinite dimensions is straight-forward, but
seems not to be widely known.

\begin{proposition}\label{prop:curve}
	Let $X$, $Y$ and $Z$ be Banach spaces.
	Let $K\colon X \times Y \to Z$ be a locally Lipschitz continuous compact operator.
	Assume that $K(0,0) = 0$ and that there exist $\kappa < 1$ and $\delta > 0$ such that
	$\|K(x,y_1) - K(x,y_2)\| \le \kappa \|y_1 - y_2\|$ whenever $x \in B_X(0,\delta)$
	and $y_1, y_2 \in B_Y(0,\delta)$.
	Then there exists a neighborhood $U \times V$ of $(0,0)$ in $X \times Y$
	and a Lipschitz continuous function $\phi\colon U \to V$ with the property
	that for $(x,y) \in U \times V$ we have $K(x,y) = y$ if and only if $y = \phi(x)$.
\end{proposition}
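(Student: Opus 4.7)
My plan is to sidestep the topological/degree-theoretic machinery alluded to in the section's opening and reduce the statement to a parametric version of the Banach contraction mapping theorem. Under the stated hypotheses the compactness of $K$ plays no role in the proof; what is decisive is the $\kappa$-contraction property in the $y$-variable.

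For $x$ with $\|x\|$ small, I set $T_x(y) := K(x,y)$. By continuity of $K$ at $(0,0)$ together with $K(0,0) = 0$, I can choose $\delta' \in (0,\delta]$ such that $\|K(x,0)\| < (1-\kappa)\delta$ for all $x \in B_X(0,\delta')$. For such $x$ and any $y \in \overline{B_Y(0,\delta)}$ the contraction hypothesis gives
\[
	\|T_x(y)\| \le \|K(x,0)\| + \kappa \|y\| < (1-\kappa)\delta + \kappa\delta = \delta,
\]
so $T_x$ maps $\overline{B_Y(0,\delta)}$ into itself and is a $\kappa$-contraction there. Banach's fixed-point theorem produces a unique $\phi(x) \in \overline{B_Y(0,\delta)}$ solving $K(x,\phi(x)) = \phi(x)$, and the uniqueness statement of the proposition follows by taking $U := B_X(0,\delta')$ and $V := B_Y(0,\delta)$.

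To upgrade this pointwise construction to Lipschitz continuity, I would invoke joint local Lipschitz continuity of $K$: shrinking $\delta'$ if necessary, there is some $L$ with $\|K(x_1,y) - K(x_2,y)\| \le L\|x_1-x_2\|$ for all $y \in \overline{B_Y(0,\delta)}$ and all $x_1,x_2 \in B_X(0,\delta')$. Then the standard triangle-inequality computation
\[
	\|\phi(x_1) - \phi(x_2)\| \le \kappa \|\phi(x_1) - \phi(x_2)\| + L\|x_1 - x_2\|
\]
yields $\|\phi(x_1) - \phi(x_2)\| \le \tfrac{L}{1-\kappa}\|x_1-x_2\|$, which is the required Lipschitz bound.

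There is essentially no serious obstacle on this route; the only care needed is to order the choices of radii correctly (first fix the $\delta$ furnished by the contraction hypothesis, then shrink to $\delta'$ so that $T_x$ self-maps the ball). The one situation in which this direct approach would fail, and the author's degree-theoretic machinery would become unavoidable, is if one weakened the hypothesis to having $K(x,\cdot)$ merely a compact Lipschitz perturbation of the identity rather than a strict contraction; in that generality Banach fixed-point is unavailable and Leray--Schauder degree takes over, which likely accounts for the otherwise-unused compactness assumption. For Proposition~\ref{prop:curve} as stated, however, the parametric contraction argument above is entirely sufficient.
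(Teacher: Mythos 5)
Your proof is correct and takes a genuinely different --- and more elementary --- route than the paper's. The paper introduces $f(x,y) \coloneqq (x, \varepsilon y - \varepsilon K(x,y))$ for $0 < \varepsilon < L^{-1}$, derives a coercivity estimate of the form $\|f(x_1,y_1)-f(x_2,y_2)\| \ge (1-\varepsilon L)\|x_1-x_2\| + \varepsilon(1-\kappa)\|y_1-y_2\|$ to show that $f$ is injective near the origin, and then invokes a degree-theoretic inversion theorem for injective compact perturbations of the identity to conclude that $f$ has a continuous local inverse, from which $\phi$ is read off as the second component of $f^{-1}(x,0)$. You instead observe that the hypothesis $\|K(x,y_1)-K(x,y_2)\| \le \kappa\|y_1-y_2\|$ with $\kappa < 1$ makes $K(x,\cdot)$ a parametric strict contraction, so that a self-mapping closed ball together with Banach's fixed-point theorem produces $\phi$ directly, and the standard parametric estimate yields the Lipschitz dependence. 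Your diagnostic remark is likewise accurate: compactness of $K$ plays no role in your argument, and since the paper's coercivity estimate already requires $\kappa < 1$ in order to give injectivity, compactness buys no extra generality in the paper's proof either; the heavier degree-theoretic machinery would only become indispensable if the strict-contraction hypothesis in the $y$-variable were relaxed. The one small point to tidy up is that the paper phrases the contraction inequality on the open ball $B_Y(0,\delta)$, whereas you apply it on the closed ball $\overline{B_Y(0,\delta)}$ including its boundary; since $K$ is locally Lipschitz, hence continuous, the inequality extends to the closure, or one can simply run your argument inside a slightly smaller closed ball $\overline{B_Y(0,\delta'')}$ with $\delta'' < \delta$. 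Either fix is routine and does not affect the substance of your proof.
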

\begin{proof}
	Let $L \ge 0$ denote the Lipschitz constant of $K$ in a neighborhood of $(0,0)$
	and pick $0 < \eps < L^{-1}$.
	Setting $f(x,y) \coloneqq (x, \eps y - \eps K(x,y))$ we easily obtain that
	\begin{equation}\label{eq:fest}
		\|f(x_1,y_1) - f(x_2,y_2)\|
			\ge (1 - \eps L) \|x_1 - x_2\| + \eps (1-\kappa) \|y_1 - y_2\|,
	\end{equation}
	from which we see that $f$ is injective near $(0,0)$. Since $f$ is a compact
	perturbation of the identity this implies
	that $f$ is continuously invertible in a neighborhood of $(0,0)$, see~\cite[(5.4.11)]{Berger}.
	Define $\phi(x)$ to be the second component of $f^{-1}(x,0)$.
	By~\eqref{eq:fest} the operator $f^{-1}$ is Lipschitz continuous, hence so is $\phi$.
	Moreover, $K(x,y) = y$ if and only if $f(x,y) = (x,0)$, which for $(x,y)$ in a neighborhood
	of $(0,0)$ is equivalent to $\phi(x) = y$.
\end{proof}

\section{Existence of small solutions}
We now prove existence of small solutions of~\eqref{eq:scalar} (or the more general equation~\eqref{eq:system})
by constructing a curve of solutions emanating from $(\lambda,u) = (0,0)$, for which we use the implicit function theorem
of the previous section.
In order to emphasize the flexibility of our approach, in particular that the method does not
rely on the variational structure of~\eqref{eq:scalar},
we consider a quasi-linear reaction-diffusion system instead of a scalar equation,
compare also~\cite{AC02}.

\begin{theorem}\label{thm:curve}
	Let $\Omega \subset \mathds{R}^N$ be open and bounded, let $d \in \mathds{N}$,
	fix numbers $p_1,\dots,p_d \in (\frac{2N}{N+2},2]$ and
	let $g\colon \mathds{R}^d \to \mathds{R}^d$ be locally Lipschitz continuous. Then there exist $\lambda_0 > 0$
	such that for every $\lambda \in (-\lambda_0,\lambda_0)$ the system
	\begin{equation}\label{eq:system}
		\left\{ \begin{aligned}
			-\Delta_{p_i} u_i & = \lambda g_i(u_1,\dots,u_d) && \text{on } \Omega \\
			u_i & = 0 && \text{on } \partial\Omega \\
		\end{aligned} \right.
	\end{equation}
	has a solution $(u_1,\dots,u_d) = u = u_\lambda$ in $W^{1,p}_0(\Omega; \mathds{R}^d) \cap L^\infty(\Omega; \mathds{R}^d)$;
	in particular $g(u_\lambda) \in L^\infty(\Omega;\mathds{R}^d) \subset W^{-1,p'}(\Omega;\mathds{R}^d)$
	so that the notion of an (energy) solution applies here.

	Moreover, the mapping $\lambda \to u_\lambda$ is Lipschitz continuous from $(-\lambda_0,\lambda_0)$
	to $L^\infty(\Omega) \cap W^{1,p}_0(\Omega)$.
	Finally, there exist $\eps > 0$ and $\lambda_1 \in (0,\lambda_0)$
	such that for all $\lambda \in (-\lambda_1,\lambda_1)$
	the function $u_\lambda$ is the only solution $u$ of~\eqref{eq:system} that
	satisfies $\|u\|_\infty \le \eps$.
\end{theorem}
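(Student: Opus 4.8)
The plan is to recast the system \eqref{eq:system} as a fixed-point problem in $L^\infty(\Omega;\mathds{R}^d)$ and apply Proposition~\ref{prop:curve} with $X = \mathds{R}$ and $Y = Z = L^\infty(\Omega;\mathds{R}^d)$. First I would pick $q \in (1,\infty)$ large enough that Lemma~\ref{lem:lipschitz} is available for each of the exponents $p_1,\dots,p_d$ and define
\[
	K(\lambda,u) \coloneqq \bigl( (-\Delta_{p_i})^{-1}(\lambda g_i(u)) \bigr)_{i=1}^{d}.
\]
Since $\Omega$ is bounded we have $L^\infty(\Omega) \hookrightarrow L^q(\Omega)$, and since $g$ is locally Lipschitz continuous the map $(\lambda,u) \mapsto \lambda g(u)$ is locally Lipschitz continuous and bounded from $\mathds{R} \times L^\infty(\Omega;\mathds{R}^d)$ into $L^q(\Omega;\mathds{R}^d)$. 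Composing with $(-\Delta_{p_i})^{-1}$, which is locally Lipschitz continuous from $L^q(\Omega)$ to $L^\infty(\Omega)$ by Lemma~\ref{lem:lipschitz} and compact on $L^\infty(\Omega)$ by Example~\ref{ex:pLaplace}, shows that $K$ is a locally Lipschitz continuous compact operator with $K(0,0) = 0$. Moreover, for $u \in L^\infty(\Omega;\mathds{R}^d)$ each $\lambda g_i(u)$ lies in $L^\infty(\Omega) \subset W^{-1,p_i'}(\Omega)$, so a function $u \in W^{1,p}_0(\Omega;\mathds{R}^d) \cap L^\infty(\Omega;\mathds{R}^d)$ is an energy solution of \eqref{eq:system} if and only if $K(\lambda,u) = u$.

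The key step is to verify the contraction hypothesis of Proposition~\ref{prop:curve}. Fix $\delta \in (0,1]$, let $|\lambda| \le \delta$, and let $\|u_1\|_\infty, \|u_2\|_\infty \le \delta$. Writing $C_\delta \coloneqq \sup\{|g(w)| : |w| \le \delta\}$ and letting $L_\delta$ be a Lipschitz constant for $g$ on the closed ball of radius $\delta$ in $\mathds{R}^d$, an application of Lemma~\ref{lem:lipschitz} in the $i$-th component, with right-hand sides $\lambda g_i(u_1)$ and $\lambda g_i(u_2)$, followed by $\|\cdot\|_q \le |\Omega|^{1/q}\|\cdot\|_\infty$, gives
\[
	\|K(\lambda,u_1) - K(\lambda,u_2)\|_\infty
		\le \Bigl( \max_{1 \le i \le d} c\,(2|\lambda|\,|\Omega|^{1/q}\,C_\delta)^{\frac{2-p_i}{p_i-1}} \Bigr)\,|\lambda|\,|\Omega|^{1/q}\,L_\delta\,\|u_1 - u_2\|_\infty.
\]
Because $p_i \in (\tfrac{2N}{N+2},2]$, the exponents $\tfrac{2-p_i}{p_i-1}$ are nonnegative, so the prefactor is bounded by a constant times $\max_i |\lambda|^{1/(p_i-1)} \to 0$ as $\lambda \to 0$; this is the one place where the smallness of $\lambda$ and the bound $p_i \le 2$ are used together. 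Hence we may fix $\delta$ so small that this prefactor equals some $\kappa < 1$ for all $|\lambda| \le \delta$, and Proposition~\ref{prop:curve} provides $\lambda_0 > 0$, a neighborhood $U \times V$ of $(0,0)$ with $(-\lambda_0,\lambda_0) \subset U$, and a Lipschitz continuous $\phi\colon U \to V$ such that $K(\lambda,u) = u$ is equivalent to $u = \phi(\lambda)$ for $(\lambda,u) \in U \times V$.

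Setting $u_\lambda \coloneqq \phi(\lambda)$ for $\lambda \in (-\lambda_0,\lambda_0)$, the first paragraph shows that $u_\lambda \in W^{1,p}_0(\Omega;\mathds{R}^d) \cap L^\infty(\Omega;\mathds{R}^d)$ is an energy solution with $g(u_\lambda) \in L^\infty(\Omega;\mathds{R}^d)$, and $\lambda \mapsto u_\lambda$ is Lipschitz continuous into $L^\infty(\Omega;\mathds{R}^d)$ because $\phi$ is. For uniqueness, choose $\eps > 0$ and $\lambda_1 \in (0,\lambda_0)$ with $B_{L^\infty}(0,\eps) \subset V$ and $(-\lambda_1,\lambda_1) \subset U$; any energy solution $u$ of \eqref{eq:system} with $\|u\|_\infty \le \eps$ and $|\lambda| < \lambda_1$ then satisfies $K(\lambda,u) = u$ and $(\lambda,u) \in U \times V$, so $u = \phi(\lambda) = u_\lambda$. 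It remains to upgrade the Lipschitz dependence to the $W^{1,p}_0$-norm: for $\lambda,\mu \in (-\lambda_0,\lambda_0)$ and each $i$ I would subtract the weak formulations for $u_{\lambda,i}$ and $u_{\mu,i}$, test with $u_{\lambda,i} - u_{\mu,i}$, and combine the standard monotonicity inequality for $-\Delta_{p_i}$ (valid since $p_i \le 2$) with Hölder's inequality to bound $\|\nabla(u_{\lambda,i} - u_{\mu,i})\|_{p_i}^{p_i}$ by the resulting right-hand side raised to the power $p_i/2$, times a power of $\|\nabla u_{\lambda,i}\|_{p_i} + \|\nabla u_{\mu,i}\|_{p_i}$. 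That right-hand side is at most $\|\lambda g_i(u_\lambda) - \mu g_i(u_\mu)\|_\infty\,|\Omega|\,\|u_{\lambda,i} - u_{\mu,i}\|_\infty = O(|\lambda - \mu|^2)$ by the $L^\infty$-bound just proved, while testing the single equation for $u_{\lambda,i}$ with $u_{\lambda,i}$ and using Poincaré's inequality gives $\|\nabla u_{\lambda,i}\|_{p_i}^{p_i - 1} \le C\|g_i(u_\lambda)\|_\infty$, which is bounded for $\lambda$ in a bounded set. Rearranging yields $\|\nabla(u_{\lambda,i} - u_{\mu,i})\|_{p_i} \le C|\lambda - \mu|$.

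I expect the contraction estimate of the second paragraph to be the main obstacle: it is where the quantitative strength of Lemma~\ref{lem:lipschitz} (rather than mere continuity of $(-\Delta_{p_i})^{-1}$) is essential, and where the restriction $p_i \le 2$ enters. The $W^{1,p}_0$-bound of the last paragraph is the most computational part but is routine once the $L^\infty$-estimate is in place.
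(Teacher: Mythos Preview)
Your proposal is correct and follows essentially the same route as the paper: define $K$ via $(-\Delta_{p_i})^{-1}$, obtain compactness from Example~\ref{ex:pLaplace}, derive the contraction estimate $\|K(\lambda,u)-K(\lambda,\tilde u)\|_\infty \le C\lambda^{1/(p-1)}\|u-\tilde u\|_\infty$ from Lemma~\ref{lem:lipschitz}, and apply Proposition~\ref{prop:curve}. The only difference is in the last step: for the $W^{1,p}_0$-Lipschitz dependence the paper simply cites the local Lipschitz continuity of $(-\Delta_p)^{-1}\colon W^{-1,p'}(\Omega)\to W^{1,p}_0(\Omega)$, whereas you re-derive it by the standard monotonicity/H\"older argument; both are valid and routine.
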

\begin{proof}
	Consider the nonlinear operator $K$
	from $\mathds{R} \times L^\infty(\Omega; \mathds{R}^d)$ to $L^\infty(\Omega; \mathds{R}^d)$
	given by
	\[
		(K(\lambda, u))_i \coloneqq (-\Delta_{p_i})^{-1} \bigl(\lambda g_i(u)\bigr).
	\]
	By Example~\ref{ex:pLaplace} the operator $K$ is compact. By Lemma~\ref{lem:lipschitz}
	and local Lipschitz continuity of $g$ the operator $K$ is locally Lipschitz continuous. Moreover,
	\begin{align*}
		\|K(\lambda, u) - K(\lambda, \tilde{u})\|_\infty
			& \le c_1 \lambda^{\frac{1}{p-1}} \bigl( \|g(u)\|_\infty + \|g(\tilde{u})\|_\infty \bigr) \|g(u) - g(\tilde{u})\|_\infty \\
			& \le c_2 \lambda^{\frac{1}{p-1}} \|u - \tilde{u}\|_\infty
	\end{align*}
	by Lemma~\ref{lem:lipschitz} with constants $c_1$ and $c_2$ that depend only on the $p_i$, an upper bound $R$
	for $u$ and $\tilde{u}$ in $L^\infty(\Omega;\mathds{R}^d)$ and the Lipschitz constant of $g$ on $B_{\mathds{R}^d}(0,R)$.

	Hence the assumptions of Proposition~\ref{prop:curve} are satisfied. We deduce that
	the equation $K(\lambda,u) = u$, i.e., problem~\eqref{eq:system}, is locally solved by an implicit function
	$u_\lambda \coloneqq \phi(\lambda)$, that there are no other $L^\infty$-small solutions for small $\lambda$ and
	that the dependence of $u_\lambda$ on $\lambda$ is Lipschitz continuous with respect to the norm
	of $L^\infty(\Omega)$. Lipschitz continuous dependence
	in the norm of $W^{1,p}_0(\Omega)$ follows from the identity $u_\lambda = (-\Delta_p)^{-1} (\lambda g(u_\lambda))$
	since $(-\Delta_p)^{-1}$ is locally Lipschitz continuous from $W^{-1,p'}(\Omega)$ to $W^{1,p}_0(\Omega)$,
	see~\cite[Theorem~3.3.18 and Example~3.3.23]{Diss}.
\end{proof}

We have restricted ourselves to Dirichlet boundary conditions for simplicity.
Still, essentially the same arguments apply to the system
\[
	\left\{ \begin{aligned}
		|u_i|^{p_i-2}u_i - \Delta_{p_i} u_i & = \lambda g_i(u_1,\dots,u_d) && \text{on } \Omega \\
		|\nabla u_i|^{p-2} \frac{\partial u_i}{\partial\nu} & = 0 && \text{on } \partial\Omega, \\
	\end{aligned} \right.
\]
subject to Neumann boundary conditions if $\Omega \subset \mathds{R}^N$ is a bounded Lipschitz domain.
Moreover, we could let $g$ depend on $x \in \Omega$ or consider more general quasi-linear operators for which
the assumptions of~\cite[Theorem~3.1]{Bie10} hold.

\begin{remark}
	Theorem~\ref{thm:curve} generalizes the first step of the argument in~\cite{CR75} from $p=2$
	to a larger range. It would now
	be interesting to study the behavior of $u_\lambda$ as $\lambda$ increases.
	For $d=1$ and $g = \exp$, i.e., Gelfand's equation with arbitrary $p \in (1,\infty)$,
	it is known that there are no solutions for sufficiently large $\lambda$,
	see~\cite{AAP94}. One would suspect that one observes
	the same turning point structure as described in~\cite{CR75} for $p=2$
	where the curve ceases to exist.
	In fact, this is true if $\Omega$ is a ball~\cite[Example~3.1]{JS02},
	but it is not obvious how to handle the case $p \neq 2$ for general domains.
\end{remark}

\bibliographystyle{amsalpha}
\bibliography{pgelfand}

\end{document}